\newtheorem{thm}[subsection]{Theorem}
\newtheorem{lemma}[subsection]{Lemma}
\newtheorem{pro}[subsection]{Proposition}
\newtheorem{rk}[subsection]{Remark}
\numberwithin{equation}{section} \setcounter{tocdepth}{1}
\newcommand{\bea}{\begin{eqnarray}}
\newcommand{\eea}{\end{eqnarray}}
\begin{document}
\title[Dynamics of linear maps of idempotent measures]{Dynamics of linear maps of idempotent measures}

\author{U.A. Rozikov, \, M.M. Karimov}

 \address{U.\ A.\ Rozikov\\ Institute of mathematics and information technologies,
Tashkent, Uzbekistan.}
\email {rozikovu@yandex.ru}

\address{M.\ M.\ Karimov\\ Namangan State University, Namangan, Uzbekistan.}
\email {mmkarimov86@rambler.ru}

\begin{abstract} We describe all linear operators which maps $n-1$-dimensional simplex of idempotent measures to itself.
Such operators divided to two classes: the first class contains all $n\times n$-matrices with non-negative entries which has at least one zero-row; the second class contains all $n\times n$-matrices with non-negative entries which in each row and in each column has exactly one non-zero entry. These matrices play a role of the stochastic matrices in case of idempotent measures. For both classes of linear maps we find fixed points. We also study the dynamical systems generated by the linear maps of the set of idempotent measures.

{\it AMS classifications (2010):} 37C25; 47Axx.\\[2mm]

{\it Keywords:} Linear map; dynamical system; idempotent measure; fixed point; trajectory; limit point.
\end{abstract}
 \maketitle
\section{Introduction} \label{sec:intro}

 In the most general sense, a {\it dynamical system} is a tuple $(T, M,\Phi)$ where $T$ is a monoid,
written additively, $M$ is a set and $\Phi$ is a function
$\Phi: U \subset T \times M \to M$ with
$$ I(x) = \{ t \in T : (t,x) \in U \}, \,  \Phi(0,x) = x, $$ $$  \Phi(t_2,\Phi(t_1,x)) = \Phi(t_1 + t_2, x),
\, \mbox{for} \, t_1, t_2, t_1 + t_2 \in I(x).$$

The function $\Phi(t,x)$ is called the {\it evolution function} of the dynamical system: it associates
to every point in the set $M$ a unique image, depending on the variable $t$, called the {\it evolution parameter}.
$M$ is called {\it phase space} or {\it state space}, while the variable $x$ is called {\it initial state} of the system.

Varying parameters $T, M, \Phi$ one can define different kind of dynamical systems, for example, it is known the following cases:

{\it Real dynamical system:} A real dynamical system, {\it real-time dynamical system} or flow is a tuple $(T, M, \Phi)$ with $T$ an open interval in the real numbers $\mathbf R$, $M$ a manifold locally diffeomorphic to a Banach space, and $\Phi$ a continuous function.

{\it Cellular automaton:} A cellular automaton is a tuple $(T, M, \Phi)$, with $T$ the integers, $M$ a finite set, and $\Phi$ an evolution function.

{\it Discrete dynamical system:} A discrete dynamical system, {\it discrete-time dynamical system}, map or cascade is a tuple $(T, M, \Phi)$ with $T$ the integers, $M$ a manifold locally diffeomorphic to a Banach space, and $\Phi$ a function.

If $M=S^{m-1}=\left\{x=(x_1,\dots,x_m)\in \mathbf R^m: x_i\geq 0,\, \sum_{i=1}^m x_i=1\right\}$-simplex of probability measures on the set $E=\{1,2,\dots, m\}$
and $\Phi$ is a stochastic matrix, then the dynamical system describes a Markov chain \cite{sh}. Moreover if $M=S^{m-1}$ and
$\Phi$ is a quadratic stochastic operator \cite{gmr} then the dynamical system describes evolution of a population.

In idempotent mathematics the usual arithmetic operations are replaced with a new set of basic associative operations
(a new addition $\oplus$ and a new multiplication $\odot$) so that all the semifield or semiring axioms hold; moreover, the
new addition is idempotent, i.e., $x\oplus x = x$ for every element $x$ of the corresponding semiring, see, e.g., \cite{a,dd,dd1,l1,l2,m,z}.

A typical example is the semifield $\mathbf R_{\max}=\mathbf R\cup\{-\infty\}$ known as the Max-Plus algebra. This semifield consists of all real numbers and an additional element ${\bf 0}=-\infty$. This element ${\bf 0}$ is the zero element in $\mathbf R_{\max}$, and the basic operations are defined by the formulas $x\oplus y =\max\{x, y\}$ and $x\odot y = x + y$; the identity (or unit) element ${\bf 1}$ coincides with the usual zero 0.

In this paper, we consider $M$ as the simplex $\mathcal I_n$ of idempotent measures on $\{1,2,\dots,n\}$, where
 $$\mathcal I_n=\{(x_1,...,x_n)\in \mathbf R^{n}_{\rm max} : \max_{1\leq i\leq n}x_i=0\}=\{(x_1,...,x_n)\in{\mathbf R^{n}_{\rm max}}: x_1\oplus...\oplus x_n={\bf 1}\}$$ and consider linear maps of $\mathcal I_n$ to itself.
The term 'idempotent measure' in our case is used in the sense of
idempotent analysis (\cite{a,l1,l2,m,z}) rather than in the sense of abstract
harmonic analysis as in \cite{r}.

Idempotent measure theory has recently emerged as a
new branch of mathematics analysis for studying deterministic control problems and
first-order nonlinear partial differential equations such as Hamilton-Jacobi equations
with discontinuous initial data and low-lying eigenfunctions of Schr\"odinger operator.
During the past 20 years its applications have grown, establishing unexpected
connections with a number of other fields.

The linear dynamical systems of idempotent measures which we shall consider in
this paper are "idempotent" analogies of Markov chains, in our case a state
of the Markov chain is an idempotent measure, but linearity of the evolution
operator is defined by the usual operations $+$ and $\cdot$. In \cite{dd,dd1} an idempotent analogue of the Markov chain is introduced in their case the linearity of the evolution operator is defined by the new operations $\oplus$ and $\odot$.
Thus investigations of our paper has more relations with usual analysis than idempotent one.

The paper is organized as follows. In Section 2 we describe all linear operators which maps $n-1$-dimensional simplex of idempotent measures to itself. Such operators divided to two classes: the first class contains all $n\times n$-matrices with non-negative entries which has at least one zero-row; the second class contains all $n\times n$-matrices with non-negative entries which in each row and in each column has exactly one non-zero entry. These matrices play a role of the stochastic matrices in case of idempotent measures. Hence they can be called idempotent-stochastic matrices. Section 3 contains a description of fixed points for both classes of linear maps. The last Section is devoted to the dynamical systems generated by the linear maps of the set of idempotent measures.

\section{Linear maps of $\mathcal I_n$ to itself}

It is known (see, e.g., \cite{sh}) that the properties of homogeneous Markov chains with the phase
space $E = \{1,\dots,m\}$ can by completely determined by the initial distribution $x\in S^{m-1}$ and a
stochastic matrix $P$, i.e., $P$ maps $S^{m-1}$ to itself iff $P$ is a stochastic matrix. The theory of
dynamical systems (Markov chains) generated  by $P$ is known (see for example \cite{sh}).

In this section we shall answer the question: which linear map maps $\mathcal I_n$ to itself?
The following theorem describes such linear maps.

\begin{thm}\label{t1}
A linear operator $A=(a_{ij})_
{i,j=1,\dots, n}$ maps $\mathcal I_n$ to itself, if and only if it satisfies one of the following conditions:
\begin{itemize}
\item[(i)] $a_{ij}\geq0$ and $A$ has at least one zero-row;

\item[(ii)] $a_{ij}\geq0$ and each row and each column of $A$ contains exactly one non-zero element.
\end{itemize}
\end{thm}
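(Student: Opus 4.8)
The plan is to unwind what the membership $Ax\in\mathcal I_n$ means coordinate-by-coordinate, where $A$ acts by ordinary matrix multiplication $(Ax)_i=\sum_{j=1}^n a_{ij}x_j$ (with the convention $0\cdot(-\infty)=0$). Since every $x\in\mathcal I_n$ has all coordinates $\le 0$ and at least one coordinate equal to $0$, the requirement $Ax\in\mathcal I_n$ splits into two parts: (a) $(Ax)_i\le 0$ for every $i$, and (b) $\max_i (Ax)_i=0$, i.e. some coordinate of $Ax$ vanishes. First I would show that $a_{ij}\ge 0$ is forced: feeding in the finite point $x$ with $x_j=-1$ and $x_k=0$ for $k\ne j$ (which lies in $\mathcal I_n$) gives $(Ax)_i=-a_{ij}$, so (a) forces $a_{ij}\ge 0$ for all $i,j$. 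Conversely, once all $a_{ij}\ge 0$, condition (a) is automatic, being a sum of products of a nonnegative number with a nonpositive one. Thus the whole problem reduces to analysing (b) under the standing assumption $a_{ij}\ge 0$.

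Next I would reduce (b) to a purely combinatorial statement about the row supports $S_i=\{j:a_{ij}>0\}$. Writing $Z(x)=\{j:x_j=0\}$, the key observation is that, for $a_{ij}\ge 0$ and $x_j\le 0$, each summand $a_{ij}x_j$ is $\le 0$, so $(Ax)_i=0$ holds if and only if $a_{ij}x_j=0$ for every $j$, i.e. if and only if $S_i\subseteq Z(x)$. Since $Z(x)$ can be made to equal any prescribed nonempty subset $Z\subseteq\{1,\dots,n\}$ (take $x_j=0$ on $Z$ and $x_j=-1$ off $Z$), condition (b) becomes equivalent to the statement: \emph{for every nonempty $Z\subseteq\{1,\dots,n\}$ there is a row $i$ with $S_i\subseteq Z$.}

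Finally I would run the dichotomy that yields exactly (i) and (ii). If some row is a zero-row, then $S_i=\emptyset\subseteq Z$ for every $Z$, so (b) holds and we are in case (i); indeed such a row forces $(Ax)_i=0$ identically, which alone guarantees $\max_i(Ax)_i=0$. If instead no row is zero, every $S_i$ is nonempty, and testing the singletons $Z=\{k\}$ forces, for each $k$, some row with $S_i\subseteq\{k\}$, hence $S_i=\{k\}$. Distinct $k$ require distinct rows (a row has a single support), giving an injection from the $n$ singletons into the $n$ rows; since the counts match, it is a bijection, so every row has singleton support and the supports exhaust all columns, which is precisely case (ii). For sufficiency I would just check both cases against the equivalent form of (b): a zero-row handles every $Z$, while in the monomial case any $k\in Z(x)$ is covered by the unique row whose support is $\{k\}$. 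I expect the only delicate points to be the bookkeeping around the $0\cdot(-\infty)$ convention (sidestepped here by using finite test points) and making the counting step airtight; the conceptual core is the support reduction, after which everything is elementary.
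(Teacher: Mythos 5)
Your proposal is correct, and it takes a genuinely different route from the paper. The paper proves sufficiency by the same direct checks you make, but for necessity it argues by contrapositive through three explicit cases (a negative entry; all rows non-zero with some row having at least two non-zero entries; all rows non-zero, each with a single entry, but some column repeated), building a specific test vector in each case. You instead isolate non-negativity first, then reduce invariance of $\mathcal I_n$ to the combinatorial condition that every non-empty $Z\subseteq\{1,\dots,n\}$ contains the support $S_i$ of some row, and derive the dichotomy (i)/(ii) from singleton tests plus an injection--counting argument. Besides being more systematic, your route is more robust than the paper's: in the paper's case a) the test vector is chosen with $x_{j_0}=0$ where $j_0$ indexes a row having at least two non-zero entries, and the claimed conclusion that $y_i\ne 0$ for every single-entry row $i$ fails whenever such a row's unique non-zero entry happens to lie in column $j_0$. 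For instance, with $n=3$, rows $1,2$ supported on column $3$ and row $3$ supported on columns $1,2$, the paper's choice $x=(x_1,x_2,0)$ gives $y_1=y_2=0$, so it does not witness non-invariance, even though $A$ indeed fails to preserve $\mathcal I_3$ (as $x=(0,-1,-1)$ shows). Your argument avoids this because the singleton to test is selected correctly: when neither (i) nor (ii) holds, your counting step produces a column $k$ that is the support of no row, and $Z=\{k\}$ then genuinely certifies failure. In short, your support-covering lemma treats the paper's cases a) and b) uniformly and closes this gap, at the modest cost of introducing one abstract reformulation where the paper tries to stay entirely hands-on.
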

\proof {\it Sufficiency}. 1. Assume the condition (i) is satisfied. For any $x\in \mathcal I_n$ we shall show that $y=A(x)\in \mathcal I_n$. If $i$th row of $A$ is zero row then  $y_i=0$.  Since $a_{kj}\geq 0$ and $x_j\leq 0$ we get
$$y_k=\sum_{j=1}^na_{kj}x_j\leq 0, \ \ \mbox{for all}\ \ k\ne i.$$ Hence
$y=A(x)\in \mathcal I_n$, for any $x\in \mathcal I_n.$

2. Assume now that the condition  (ii) is satisfied. Then there are
$i_1,i_2,...,i_n\in\{1,...,n\}$ such that $a_{ji_j}> 0$ and $a_{jk}=0$ if $k\ne i_j$. In this case the operator $y=A(x)$ has the following form
$$y_1=a_{1i_1}x_{i_1},\ \ y_2=a_{2i_2}x_{i_2},\,...\,, y_n=a_{ni_n}x_{i_n}.$$

By $x_i\leq0$ and $a_{ji_j}> 0$ we get $y_i\leq0$
for all $i=1,\dots,n$.  Since at least one of coordinates $x_j$ is equal to zero, at least one of $y_i=a_{ij}x_j$ is equal to zero. Hence $y=A(x)\in \mathcal I_n$.

{\it Necessariness.} Assume both conditions (i) and (ii) are not satisfied. The following three cases are possible:

 a) All rows of $A$ are non-zero and at least one row contains more than one non-zero element. Without lost of the generality we divide rows of $A$ in two class: in each row  $a^{(i)}=(a_{i1},...,a_{in})$,
$i=1,...,n_1$ there exists exactly one non-zero element $a_{ik_i}$; in each row
$a^{(i)}$, $i=n_1+1,...,n$ there are at least two non-zero elements. Then operator $A$ has the following form
$$\begin{array}{lllllll}
y_1=a_{1k_1}x_{k_1},\\
y_2=a_{2k_2}x_{k_2},\\
\dots\\
y_{n_1}=a_{n_1k_{n_1}}x_{k_{n_1}},\\
y_{n_1+1}=\sum_{j} a_{n_1+1j}x_j,\\
\dots\\
y_n=\sum_{j} a_{nj} x_j.
\end{array}$$

Take $j_0\in\{n_1+1,...,n\}$ and choose $x=(x_1,...,x_n)$ such that $x_{j_0}=0, \ x_j\neq0, \ j\neq j_0$. Then $y_i\neq 0, \
i=1,...,n_1$ and since in each $y_i, i=n_1+1,...,n$ there are at least two non-zero coefficient $a_{ij}$, we get $y_i\neq 0$,
for any $i=n_1+1,...,n$. Hence the condition $\max\{y_1,...,y_n\}=0$ is not satisfied and $y=A(x)\notin \mathcal I_n$.

b) All rows of $A$ are non-zero and at least one column contains more than one non-zero element, but each row has exactly one non-zero element. Then such a matrix contains at least one zero-column. Without lost of generality we assume that the last column is zero. Then coordinate $x_n$ of $x$ does not give any contribution to $A(x)$. If we take $x$ such that $x_n=0$ and $x_i<0$ for all $i\ne n$ then $y_i\ne 0$ for any $i=1,\dots,n$ so $y=A(x)\notin \mathcal I_n$.
  
c) Assume one of $a_{ij}$ is negative, let $a_{ik_i}<0$. We choose $x\in \mathcal I_n$ such that $x_{k_i}\neq 0$ and $x_j=0$ for all $j\neq k_i$. Then $a_{ik_i}x_{k_i}>0$, i.e. $i$-th coordinate of $y$ is positive. Hence
$y \notin \mathcal I_n$.
\endproof

The following proposition gives characteristic of the operator satisfying condition (ii).

Let $s_n$ be the group of permutations of ${1, 2, \dots, n}$.

\begin{pro}\label{p1} \begin{itemize}
\item[(a)] For any matrix $A$ satisfying condition (ii) there exists $\pi=\pi(A)\in s_n$ such that
$$A=A_\pi=\left(
  \begin{array}{cccc}
 a_{11} & \dots & a_{1n} \\[1.5mm]
 a_{21} & \dots & a_{2n} \\[1.5mm]
 \vdots &  \dots &\vdots \\[1.5mm]
a_{n1} &\dots & a_{nn} \\[1.5mm]
\end{array}
\right): \begin{array}{cc} a_{i\pi(i)}> 0, \ 1 \leq i
\leq n & \\[1.5mm] \mbox{and the rest of elements} \
a_{ij}=0 &\\[1.5mm]\end{array}.$$
\item[(b)] For any $\pi,\tau\in s_n$ the following equality holds
$$A_\pi A_\tau=A_{\tau\pi}.$$
The set $G=\{A_\pi: \pi\in s_n\}$ is a multiplicative group.
\end{itemize}
\end{pro}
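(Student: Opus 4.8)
For part (a), the plan is to extract the permutation directly from the support pattern of $A$. Since condition (ii) guarantees that each row $i$ carries exactly one non-zero entry, I would define $\pi(i)$ to be the unique index with $a_{i\pi(i)}>0$; by construction $a_{ij}=0$ for $j\neq\pi(i)$, which is precisely the description of $A_\pi$. It then remains only to check that $\pi$ is a genuine element of $s_n$, i.e. a bijection of $\{1,\dots,n\}$. For this I would invoke the column half of condition (ii): if $\pi(i)=\pi(i')=k$ for some $i\neq i'$, then the $k$-th column would carry two positive entries $a_{ik}$ and $a_{i'k}$, contradicting the hypothesis that every column has exactly one non-zero element. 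Hence $\pi$ is injective, and an injective self-map of a finite set is a bijection, so $\pi\in s_n$.

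For part (b), the plan is a direct entrywise multiplication. Writing $(A_\pi)_{ik}=\alpha_i$ when $k=\pi(i)$ and $0$ otherwise (with $\alpha_i>0$), and likewise $(A_\tau)_{kj}=\beta_k$ when $j=\tau(k)$ and $0$ otherwise, I would compute
$$(A_\pi A_\tau)_{ij}=\sum_{k=1}^n (A_\pi)_{ik}(A_\tau)_{kj}=\alpha_i\,(A_\tau)_{\pi(i),\,j},$$
since the only surviving term in the sum is $k=\pi(i)$. The remaining factor is non-zero precisely when $j=\tau(\pi(i))=(\tau\pi)(i)$, and then equals $\beta_{\pi(i)}$. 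Thus the product has a single positive entry $\alpha_i\beta_{\pi(i)}$ in each row $i$, located in column $(\tau\pi)(i)$, which is exactly the support pattern defining $A_{\tau\pi}$. This yields the claimed identity $A_\pi A_\tau=A_{\tau\pi}$.

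Finally, for the group assertion I would verify the axioms in turn. Closure is immediate from the identity just established, since $\tau\pi\in s_n$ whenever $\pi,\tau\in s_n$. The identity matrix $I=A_e$ (with $e$ the identity permutation and all weights equal to $1$) lies in $G$ and is a two-sided unit, and associativity is inherited from matrix multiplication. For inverses, the relation $A_\pi A_\tau=A_{\tau\pi}$ forces $\tau=\pi^{-1}$ if the product is to be $I$; I would then choose the positive weights of $A_{\pi^{-1}}$ to be the reciprocals $\beta_{\pi(i)}=1/\alpha_i$, so that the product equals $I$, which is possible exactly because every $\alpha_i>0$. I expect the only point requiring genuine care — the mild ``main obstacle'' — to be tracking the order of composition: the computation produces $\tau\pi$ rather than $\pi\tau$, so $\pi\mapsto A_\pi$ is an anti-isomorphism onto $G$, and one must stay consistent with the convention $(\tau\pi)(i)=\tau(\pi(i))$ throughout to avoid a spurious reversal.
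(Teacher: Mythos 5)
Your proposal is correct and follows essentially the same route as the paper: in (a) you read the permutation off the support pattern of $A$, and in (b) you compute the product entrywise to find the single non-zero entry $a_{i\pi(i)}b_{\pi(i)\tau(\pi(i))}$ in position $(i,\tau(\pi(i)))$, exactly as the paper does. The only difference is that you spell out what the paper dismisses with ``it is not difficult to see'' and ``one easily can check'' — namely the injectivity of $\pi$ via the column condition and the verification of the group axioms (in particular the reciprocal weights needed for inverses) — which is a welcome completion rather than a departure.
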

\begin{proof}
(a) Since each row and each column of the matrix $A$
must contain exactly one non-zero element, there are distinct numbers
$i_1,i_2,...,i_n\in\{1,...,n\}$ such that $a_{ji_j}> 0$ and $a_{jk}=0$ if $k\ne i_j$. It is not difficult to
see that every such matrix $A$ corresponds to the permutation
$\pi=(\pi(j), j=1,\dots,n)$, with $\pi(j)=i_j$. This permutation is unique. Thus if given $A$ with
condition (ii) then we can construct $\pi=\pi(A)$, conversely, if given a permutation $\pi$ we can define matrix $A_\pi$
by $a_{ij}=0$, if $j\ne \pi(i)$ and $a_{ij}>0$, if $j=\pi(i)$.

(b) Take $A_\pi=\{a_{ij}\}$ and
$A_\tau=\{b_{ij}\}$. Let $A_\pi \cdot A_\tau=\{c_{ij}\}$. It is easy to see that
$$c_{ij}=\begin{cases}
0& \mbox{if} \ \ j\ne \tau(\pi(i));\\
a_{i\pi(i)}b_{\pi(i)\tau(\pi(i))}& \mbox{if} \ \
j=\tau(\pi(i)).\\
\end{cases}$$
This gives $A_\pi A_\tau=A_{\tau\pi}$
and then one easily can check that $G$ is a group.
\end{proof}

\section{Fixed points of $A$.}

Recall that fixed points of $A:\mathcal I_n\to \mathcal I_n$ are solutions to $A(x)=x$. Denote by Fix$(A)$ the set of all fixed points of $A$.

{\bf Case} (i): Let $A$ satisfies condition (i) of Theorem \ref{t1}. Then without lost of generality we assume that there is $n_0\in\{1,\dots,n\}$ such that all rows $a^{(j)}$, $j=n_0+1,\dots,n$ are zero-rows.
For such a linear operator $A$ the following lemma is obvious:

\begin{lemma}\label{l1} \begin{itemize}
\item[1.] The set $\mathcal I_{n,n_0}=\left\{x\in\mathcal I_n: x_j=0, \, j=n_0+1,...,n\right\}$ is invariant with respect to $A$, i.e. $A(\mathcal I_{n,n_0})\subset \mathcal I_{n,n_0}$. Moreover $A(x)\in \mathcal I_{n,n_0}$ for any $x\in \mathcal I_n$.

\item[2.] The set of fixed points {\rm Fix}$(A)$ is a subset of $\mathcal I_{n,n_0}$.
\end{itemize}
\end{lemma}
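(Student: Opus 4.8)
The plan is to deduce both assertions directly from Theorem~\ref{t1} together with the single structural feature distinguishing $\mathcal{I}_{n,n_0}$, namely that the rows $a^{(j)}$ with $j>n_0$ are identically zero. First I would record that, because $A$ satisfies condition~(i), the sufficiency part of Theorem~\ref{t1} already guarantees $A(x)\in\mathcal{I}_n$ for every $x\in\mathcal{I}_n$; thus it only remains to control which coordinates of the image vanish.

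For the \emph{moreover} statement of part~1, fix an arbitrary $x\in\mathcal{I}_n$ and write $y=A(x)$. For each index $j\in\{n_0+1,\dots,n\}$ the row $a^{(j)}$ is a zero-row, so
$$y_j=\sum_{k=1}^{n}a_{jk}x_k=0.$$
Hence every image point $y$ has its last $n-n_0$ coordinates equal to $0$, and since $y\in\mathcal{I}_n$ by the previous step, we conclude $y\in\mathcal{I}_{n,n_0}$. This proves $A(x)\in\mathcal{I}_{n,n_0}$ for all $x\in\mathcal{I}_n$. The invariance claim $A(\mathcal{I}_{n,n_0})\subset\mathcal{I}_{n,n_0}$ is then just the restriction of this statement to the subset $\mathcal{I}_{n,n_0}\subset\mathcal{I}_n$.

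Part~2 is immediate from what precedes: if $x\in\mathrm{Fix}(A)$ then $x=A(x)$, and the displayed computation shows $A(x)\in\mathcal{I}_{n,n_0}$, whence $x\in\mathcal{I}_{n,n_0}$. I do not anticipate any genuine obstacle here---the content is entirely in the bookkeeping of indices---so the only point requiring care is to invoke Theorem~\ref{t1} to secure membership in $\mathcal{I}_n$ \emph{before} reading off the vanishing of the trailing coordinates; without that step one would have established only that the last coordinates of $y$ vanish, not that $y$ is itself an idempotent measure. This is presumably why the authors call the lemma obvious.
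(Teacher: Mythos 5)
Your proof is correct and is essentially the argument the paper has in mind: the paper states this lemma without proof (calling it ``obvious''), and your write-up simply supplies the intended details --- the zero rows $a^{(j)}$, $j>n_0$, force $y_j=0$, while membership of $y=A(x)$ in $\mathcal I_n$ comes from the sufficiency part of Theorem~\ref{t1}(i), and part~2 follows by applying part~1 to $x=A(x)$.
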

By this lemma the equation $A(x)=x$ has $x_j=0$ for all $j=n_0+1, \dots, n$ and can be reduced to the form
$A_{n_0}(x)=0$, where $x=(x_1,\dots,x_{n_0})$ and
$$A_{n_0}=\left(\begin{array}{cccc}
 a_{11}-1 & a_{12}&\dots & a_{1n_0} \\[1.5mm]
 a_{21} &a_{22}-1& \dots & a_{2n_0} \\[1.5mm]
 \vdots &  \dots &\vdots \\[1.5mm]
a_{n_01} &a_{n_02}&\dots & a_{n_0n_0}-1 \\[1.5mm]
\end{array}
\right).$$
If $\det(A_0)\ne 0$ then the equation $A_{n_0}(x)=0$ has unique solution
$x=(0,\dots,0)$. If  $\det(A_{n_0})=0$ and rank$(A_{n_0})=r$ then we can
assume that the first $r$ rows of $A_{n_0}$ are linearly independent,
consequently, the equation can be written as
\begin{equation}\label{n11}
x_i=-\sum_{j=r+1}^{n_0}d_{ij}x_j, \ \ i=1,\dots,r,
\end{equation}
where $d_{ij}={\det(A_{ij})\over\det(A_r)}$ with
$$A_{ij} =\left(\begin{array}{ccccccc}
a_{11}-1 &\dots & a_{1,i-1} & a_{1j} & a_{1,i+1} & \dots & a_{1r} \\[3mm]
a_{21} &\dots & a_{2,i-1} & a_{2j} & a_{2,i+1} & \dots & a_{2r} \\[3mm]
 &\dots &  & \dots &  & \dots & \\[3mm]
a_{r1} &\dots & a_{r,i-1} & a_{rj} & a_{r,i+1} & \dots & a_{rr}-1
\end{array}\right).
$$
An interesting problem is to find a necessary and sufficient
condition on matrix $D=(d_{ij})_{{i=1,\dots,r\atop
j=r+1,\dots,n_0}}$ under which the system (\ref{n11}) has unique
solution (remember that we are looking for solutions with $x_i\leq 0$).
The difficulty of the problem depends on rank $r$, here
we shall consider the case $r=n_0-1$.

\begin{pro}\label{pn} (cf. with Proposition 4.12 of \cite{clr})
\begin{itemize}
\item[1)] If $\det(A_{n_0})\ne 0$ then
the operator $A$ has unique fixed point
$(0,...,0)\in \mathcal I_n$.

\item[2)] If $\det(A_{n_0})=0$ and rank$(A_{n_0})=n_0-1$ then operator $A$ has unique fixed point $(0,...,0)$ if and only if
\begin{equation}\label{con}
\det(A_{i_0n_0})\cdot\det(A_{n_0-1})>0,
\end{equation}
for some $i_0\in \{1,\dots,n_0-1\}$.

\item[3)] If the condition (\ref{con}) is not satisfied then operator $A$ has infinitely many fixed points $x=x(\alpha)$ of the form
$$x(\alpha)=\left(-{\det(A_{1n_0})\over \det(A_{n_0-1})}\alpha,\, -{\det(A_{2n_0})\over \det(A_{n_0-1})}\alpha,\, \dots, \,
-{\det(A_{n_0-1n_0})\over \det(A_{n_0-1})}\alpha,\,\alpha,\, 0,0,\dots,0\right),$$ where $\alpha\leq 0.$
\end{itemize}
\end{pro}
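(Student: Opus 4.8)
The plan is to dispatch part 1) as immediate linear algebra and to reduce parts 2) and 3) to a sign analysis of the explicit one-parameter family of candidate fixed points. By Lemma \ref{l1} every fixed point lies in $\mathcal I_{n,n_0}$ and satisfies $A_{n_0}(x)=0$ in the variables $x=(x_1,\dots,x_{n_0})$. For part 1), if $\det(A_{n_0})\ne0$ this homogeneous system has only the trivial solution, so the unique fixed point is $(0,\dots,0)$, which indeed lies in $\mathcal I_n$.

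For parts 2) and 3) I would start from $\det(A_{n_0})=0$ together with the rank of $A_{n_0}$ being $n_0-1$, so that the solution space of $A_{n_0}(x)=0$ is one-dimensional. After the reordering already allowed in the setup we may assume $\det(A_{n_0-1})\ne0$; then $x_{n_0}$ is the single free variable (the kernel genuinely has nonzero $x_{n_0}$-component, since otherwise $A_{n_0-1}$ would be singular), and equation (\ref{n11}) with $r=n_0-1$ specializes to $x_i=-\frac{\det(A_{in_0})}{\det(A_{n_0-1})}\,x_{n_0}$ for $i=1,\dots,n_0-1$. Writing $\alpha:=x_{n_0}$ this produces exactly the family $x(\alpha)$ displayed in part 3), so the whole question becomes: for which $\alpha$ does $x(\alpha)$ belong to $\mathcal I_n$?

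The crucial observation, and the step that makes the problem tractable, is that condition (i) of Theorem \ref{t1} forces at least one zero row, hence $n_0\le n-1$, so by Lemma \ref{l1} the tail coordinates $x_{n_0+1}=\dots=x_n=0$ form a nonempty block. Consequently $\max_i x_i\ge0$ holds automatically, and membership $x(\alpha)\in\mathcal I_n$ reduces purely to the nonpositivity constraints $x_i\le0$ for $i=1,\dots,n_0$. The coordinate $x_{n_0}=\alpha$ forces $\alpha\le0$; and for $i<n_0$, writing $x_i=\frac{\det(A_{in_0})}{\det(A_{n_0-1})}(-\alpha)$ with $-\alpha\ge0$, one checks that when $\alpha<0$ the inequality $x_i\le0$ holds precisely when $\det(A_{in_0})\cdot\det(A_{n_0-1})\le0$.

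This yields the dichotomy. If (\ref{con}) holds for some $i_0$, then $\det(A_{i_0n_0})/\det(A_{n_0-1})>0$, so for every $\alpha<0$ the coordinate $x_{i_0}=-\bigl(\det(A_{i_0n_0})/\det(A_{n_0-1})\bigr)\alpha$ is strictly positive, violating membership; hence only $\alpha=0$ survives and $(0,\dots,0)$ is the unique fixed point, proving 2). If (\ref{con}) fails, then $\det(A_{in_0})\cdot\det(A_{n_0-1})\le0$ for every $i$, so each coefficient $\det(A_{in_0})/\det(A_{n_0-1})$ is $\le0$ and $x_i=-\bigl(\det(A_{in_0})/\det(A_{n_0-1})\bigr)\alpha\le0$ for every $\alpha\le0$; thus the whole ray $\{x(\alpha):\alpha\le0\}$ lies in $\mathcal I_n$, giving infinitely many fixed points and proving 3). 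The main obstacle is not any single computation but the sign bookkeeping, and above all noticing that the forced zero tail eliminates the $\max_i x_i=0$ constraint, so that only the nonpositivity of the first $n_0$ coordinates needs to be verified.
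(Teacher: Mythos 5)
Your proof is correct and takes essentially the same route as the paper: reduce via Lemma \ref{l1} to the homogeneous system $A_{n_0}(x)=0$, express the solutions by Cramer's rule as the one-parameter family (\ref{n12}), and decide uniqueness versus a ray of fixed points by the sign of $\det(A_{i n_0})\cdot\det(A_{n_0-1})$. The paper merely asserts that (\ref{con}) is necessary and sufficient; your explicit sign bookkeeping, together with the observation that the forced zero tail ($n_0\le n-1$) makes the $\max$-condition automatic, just fills in the details the paper leaves implicit.
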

\proof 1) Straightforward.

2) If rank$(A_{n_0})=n_0-1$ then from (\ref{n11}) we get
\begin{equation}\label{n12}
x_i=-{\det(A_{in_0})\over \det(A_{n_0-1})}x_{n_0}, \ \ i=1,\dots,n_0-1.
\end{equation}
From (\ref{n12}) it follows that the condition (\ref{con}) is
necessary and sufficient to have unique solution $(0,\dots,0)$.

3) Follows from (\ref{n12}) taking $\alpha=x_{n_0}$.
\endproof
Now we shall give full description of fixed points of $A$ for $n=3$ and $n_0=2$.
\begin{pro}\label{p2}
 If $n=3$ and $n_0=2$ then {\rm Fix}$(A)$ has the following form
$${\rm Fix}(A)=\left\{\begin{array}{lllll}
\{(c\alpha,\alpha,0), \alpha\leq 0\}, \ \ \mbox{if} \ \ \det(A_2)=0, \,
a_{11}<1;\\
\{(\alpha,\beta,0),\ \ \alpha\leq 0,\ \ \beta\leq 0\}, \ \ \mbox {if}\
\ a_{12}=a_{21}=0,\ \ a_{11}=a_{22}=1;\\
\{(0,0,0)\}\ \ \mbox{otherwise}
\end{array}\right.$$
where $c=\frac{a_{12}}{1-a_{11}}$.
\end{pro}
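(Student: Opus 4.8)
The plan is to reduce the fixed-point equation to a $2\times 2$ homogeneous linear system via Lemma \ref{l1}, solve that system over the admissible region $x_1\le 0,\ x_2\le 0$ by splitting on $\det(A_2)$, and then carry out a sign analysis that uses $a_{ij}\ge 0$ to decide which solutions are genuinely admissible. Concretely, since $n=3$ and $n_0=2$ the third row of $A$ vanishes, so by Lemma \ref{l1} every fixed point has $x_3=0$ and $A(x)=x$ is equivalent to $A_2(x_1,x_2)^{\top}=0$, i.e.
\begin{equation*}
(a_{11}-1)x_1+a_{12}x_2=0,\qquad a_{21}x_1+(a_{22}-1)x_2=0,
\end{equation*}
to be solved under $x_1\le 0,\ x_2\le 0$ (the requirement $\max\{x_1,x_2,0\}=0$ being automatic once $x_1,x_2\le 0$).

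Next I would split on the rank of $A_2$. If $\det(A_2)\neq 0$ then by part 1) of Proposition \ref{pn} the only solution is $(0,0)$, giving the third alternative ${\rm Fix}(A)=\{(0,0,0)\}$. If $\det(A_2)=0$ with $A_2=0$, i.e. $a_{11}=a_{22}=1$ and $a_{12}=a_{21}=0$, then both equations are vacuous and the admissible region is the whole quadrant, which is the second alternative $\{(\alpha,\beta,0):\alpha\le 0,\ \beta\le 0\}$. The remaining situation is $\det(A_2)=0$ with $\operatorname{rank}A_2=1$; here, when $a_{11}\neq 1$, Proposition \ref{pn} (with $n_0=2$, so $\det(A_1)=a_{11}-1$ and $\det(A_{12})=a_{12}$) gives the solution line
\begin{equation*}
x_1=-\frac{\det(A_{12})}{\det(A_1)}\,x_2=\frac{a_{12}}{1-a_{11}}\,x_2=c\,x_2,\qquad c=\frac{a_{12}}{1-a_{11}}.
\end{equation*}

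The crux is the sign analysis deciding which part of this line meets $\{x_1\le 0,\ x_2\le 0\}$. Writing $\alpha=x_2\le 0$, the point $(c\alpha,\alpha,0)$ is admissible iff $c\alpha\le 0$, hence iff $c\ge 0$; and since $a_{12}\ge 0$, the sign of $c$ equals the sign of $1-a_{11}$. Thus for $a_{11}<1$ one has $c\ge 0$ and the entire ray $\{(c\alpha,\alpha,0):\alpha\le 0\}$ consists of fixed points (the first alternative), whereas for $a_{11}>1$ one has $c\le 0$, forcing $\alpha=0$ and leaving only $(0,0,0)$. This is precisely where the hypothesis $a_{ij}\ge 0$ is used, and it is the step I expect to be the main obstacle, because it is the non-positivity constraint rather than the rank alone that separates a genuine ray of fixed points from the single point $(0,0,0)$.

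Finally I would treat the degenerate rank-one configurations in which the first row of $A_2$ vanishes (so the formula for $c$ is unavailable), namely $a_{11}=1$ together with $a_{12}=0$, by solving the reduced system directly. These require separate, elementary verification, and checking that they fall correctly into the stated trichotomy is the delicate bookkeeping on which the completeness of the argument ultimately rests.
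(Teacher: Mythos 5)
Your overall route --- reduce via Lemma \ref{l1} to the homogeneous $2\times 2$ system, split on $\det(A_2)$ and its rank, then intersect the kernel with the quadrant $x_1\le 0$, $x_2\le 0$ --- is exactly the ``detailed analysis of equation $A_2(x)=x$'' that the paper's one-sentence proof gestures at, so there is no disagreement about method. The problem is that your execution of the analysis contains a step that fails, and it fails precisely at the places you left vague. In the branch $\det(A_2)=0$, $a_{11}>1$, you assert that ``the sign of $c$ equals the sign of $1-a_{11}$'' and that $c\le 0$ forces $\alpha=0$. Both claims break down at $c=0$: if $a_{12}=0$ and $a_{11}>1$, then $\det(A_2)=0$ forces $a_{22}=1$, the kernel of $A_2$ is the whole line $x_1=0$, and every point of the ray $\{(0,\alpha,0):\alpha\le 0\}$ is fixed. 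Concretely, for $a_{11}=2$, $a_{22}=1$, $a_{12}=a_{21}=0$ (third row zero) one gets ${\rm Fix}(A)=\{(0,\alpha,0):\alpha\le 0\}$, not $\{(0,0,0)\}$. Only $c<0$ forces $\alpha=0$; $c=0$ does not.

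The ``degenerate rank-one configurations'' you defer at the end are likewise not routine bookkeeping that ``falls correctly into the stated trichotomy'': solving them shows that it does not. For $a_{11}=1$, $a_{12}=a_{21}=0$, $a_{22}<1$ the fixed set is $\{(\alpha,0,0):\alpha\le 0\}$; for $a_{11}=1$, $a_{12}=0$, $a_{21}>0$, $a_{22}<1$ it is the ray $x_2=\frac{a_{21}}{1-a_{22}}\,x_1$, $x_1\le 0$; for $a_{11}=1$, $a_{12}>0$ (hence $a_{21}=0$) it is $\{(\alpha,0,0):\alpha\le 0\}$. None of these fits any of the three alternatives. So the gap is not one you can close by more care: in these configurations the trichotomy as printed is violated, and your strategy, carried out honestly, proves a corrected statement rather than the stated one. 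The discrepancy is even visible inside the paper: for $n_0=2$ the uniqueness criterion (\ref{con}) of Proposition \ref{pn} reads $a_{12}(a_{11}-1)>0$, which is strictly stronger than $a_{11}>1$, and the problematic families are exactly those with $\det(A_2)=0$, $A_2\ne 0$, $a_{11}\ge 1$ but $a_{12}(a_{11}-1)=0$. Your argument is sound in the branches $\det(A_2)\ne 0$, $A_2=0$, $a_{11}<1$, and $a_{11}>1$ with $a_{12}>0$; to finish you must either add the ray cases above to ${\rm Fix}(A)$ or impose extra hypotheses (e.g.\ $a_{12}>0$ and $a_{11}\ne 1$) under which the stated trichotomy is actually true.
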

\proof The proof consists a detailed analysis of equation $A_2(x)=x$.
\endproof

 {\bf Case} (ii): Assume $A$ satisfies the condition (ii) of Theorem \ref{t1}. In this case by Proposition \ref{p1} $A$ is represented as $A_\pi$.

 Let us first consider an example:

{\bf Example 1.} Consider matrix
 $$A=\left(\begin{array}{ccccccc}
0 &\alpha &0 &0 &0 \\[2mm]
\beta &0 &0 &0 &0 \\[2mm]
0 &0 &0 &0 &\gamma \\[2mm]
0 &0 &0 &\delta &0 \\[2mm]
0 &0 &\eta &0 &0 \\[2mm]
\end{array}\right), \ \ \alpha\beta\gamma\delta\eta>0.
$$
Corresponding $\pi$ is $\pi=(12)(35)(4)$ and equation $A(x)=x$ has the following form:
\begin{equation}\label{e} x_1=\alpha x_2,\ \ x_2=\beta x_1,\ \ x_3=\gamma x_5, \ \ x_4=\delta x_4, \ \ x_5=\eta x_3.
\end{equation}
 From system (\ref{e}) we get
 \begin{equation}\label{e1} x_1=\alpha\beta x_1,\ \ x_3=\gamma\eta x_3, \ \ x_4=\delta x_4.
\end{equation}
Consequently we have
$${\rm Fix}(A)=\left\{\begin{array}{llllllll}
\{(0,0,0,0,0)\}, \ \ \mbox{if} \ \ \alpha\beta\ne 1,\, \gamma\eta\ne 1, \, \delta\ne 1;\\[2mm]
\{(x_1,\beta x_1,0,0,0), \, x_1\leq 0\}, \ \ \mbox{if} \ \ \alpha\beta=1,\, \gamma\eta\ne 1, \, \delta\ne 1;\\[2mm]
\{(0,0,x_3,0,\eta x_3), \, x_3\leq 0\}, \ \ \mbox{if} \ \ \alpha\beta\ne 1,\, \gamma\eta=1, \, \delta\ne 1;\\[2mm]
\{(0,0,0,x_4,0), \, x_4\leq 0\}, \ \ \mbox{if} \ \ \alpha\beta\ne 1,\, \gamma\eta\ne 1, \, \delta=1;\\[2mm]
\{(x_1,\beta x_1,x_3,0,\eta x_3), \, x_1\leq 0, x_3\leq 0\}, \ \ \mbox{if} \ \ \alpha\beta= 1,\, \gamma\eta= 1, \, \delta\ne 1;\\[2mm]
\{(x_1,\beta x_1,0,x_4,0), \, x_1\leq 0, x_4\leq 0\}, \ \ \mbox{if} \ \ \alpha\beta= 1,\, \gamma\eta\ne 1, \, \delta=1;\\[2mm]
\{(0,0,x_3,x_4,\eta x_3), \, x_3\leq 0, x_4\leq 0\}, \ \ \mbox{if} \ \ \alpha\beta\ne 1,\, \gamma\eta= 1, \, \delta=1;\\[2mm]
\{(x_1,\beta x_1,x_3,x_4,\eta x_3), \, x_1\leq 0, x_3\leq 0, x_4\leq 0, x_1x_3x_4=0\}, \ \ \mbox{if} \ \ \alpha\beta= \gamma\eta=\delta=1.\\[2mm]
\end{array}\right.$$

The following theorem generalizes Example 1 for arbitrary $A$ satisfying condition (ii).

\begin{thm}\label{t2} If permutation $\pi$  has the following decomposition into disjoint cycles
$$\pi=(i_{11},i_{12},...,i_{1k_1})(i_{21},i_{22},...,i_{2k_2})...(i_{q_1},i_{q_2},...,i_{qk_q}),\, 1\leq q\leq n, \, 1\leq k_j\leq n,$$
then
\begin{itemize}
\item[(1)] If $a_{i_{j1}i_{j2}}a_{i_{j2}i_{j3}}...a_{i_{jk_j}i_{j1}}\neq 1$, for all $j=1,\dots,q$ then the fixed point $(0,0,...,0)$ of $A_\pi$ is unique.

\item[(2)] If there are $j_1,j_2,...,j_p$, with some $p\in \{1,\dots,q\}$  such that
$a_{i_{j_s1}i_{j_s2}}...a_{i_{j_sk_{j_s}}i_{j_s1}}=1$, $s=1,\dots, p$ then
there are infinitely many fixed points with $p$ free coordinates if $p<q$ and with $q-1$ free coordinates if $p=q$.
\end{itemize}
\end{thm}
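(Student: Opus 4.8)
The plan is to reduce the fixed-point equation $A_\pi(x)=x$ to a family of independent scalar equations, one per cycle of $\pi$, and then reassemble. First I would write the equation coordinatewise. Since $a_{i\pi(i)}>0$ is the only non-zero entry in row $i$, the system $A_\pi(x)=x$ reads
$$x_i=a_{i\pi(i)}\,x_{\pi(i)},\qquad i=1,\dots,n.$$
Because $\pi$ permutes indices within each of its disjoint cycles, these relations never couple coordinates belonging to different cycles; hence the analysis decouples, and it suffices to treat a single cycle $(i_{j1},i_{j2},\dots,i_{jk_j})$ and then combine the results.

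Within the $j$-th cycle the relations become $x_{i_{jl}}=a_{i_{jl}i_{j,l+1}}x_{i_{j,l+1}}$, with indices read cyclically. Substituting each equation into the previous one all the way around the cycle, I obtain
$$x_{i_{j1}}=\big(a_{i_{j1}i_{j2}}a_{i_{j2}i_{j3}}\cdots a_{i_{jk_j}i_{j1}}\big)\,x_{i_{j1}}=:P_j\,x_{i_{j1}},$$
so that $(1-P_j)x_{i_{j1}}=0$. Moreover, since every coefficient is strictly positive, solving the chain of relations shows that each coordinate of the cycle is a \emph{positive} multiple of $x_{i_{j1}}$; in particular all coordinates of the cycle share the sign of $x_{i_{j1}}$ and vanish simultaneously.

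Next I would split into the two cases of the statement. If $P_j\ne1$ then $x_{i_{j1}}=0$, and by the positivity just observed the whole cycle is forced to zero. When this holds for every $j$ (case (1)) the only solution is $x=(0,\dots,0)$, which indeed lies in $\mathcal I_n$, proving uniqueness. If instead $P_j=1$ the equation $(1-P_j)x_{i_{j1}}=0$ is vacuous, so $x_{i_{j1}}$ is a free parameter and the remaining cycle coordinates are determined by it through positive factors. Imposing $x\in\mathcal I_n$ then contributes two requirements: the sign conditions $x_i\le0$, which by positivity reduce to asking each free parameter to be $\le0$, and the normalization $\max_i x_i=0$, i.e. at least one coordinate must equal zero.

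Finally I would assemble the count in case (2). With $p$ cycles satisfying $P_{j_s}=1$ and the other $q-p$ cycles forced to zero, the solution set carries exactly $p$ free parameters, each ranging over $(-\infty,0]$. If $p<q$ there is at least one vanishing cycle, so $\max_i x_i=0$ is automatic and all $p$ parameters stay free, yielding $p$ free coordinates. If $p=q$ no cycle is forced to zero, so $\max_i x_i=0$ can hold only if some free parameter is zero; this single constraint (the vanishing of the product of the free parameters, as in the last line of Example 1) lowers the dimension to $q-1$ free coordinates. The step needing the most care is exactly this last one: correctly translating the normalization $\max_i x_i=0$ into the reduction of the number of free parameters, while checking that the sign constraints $x_i\le0$ remain compatible with the cyclic proportionalities.
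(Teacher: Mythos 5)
Your proof is correct and takes essentially the same route as the paper's: reduce $A_\pi(x)=x$ to the decoupled cyclic relations $x_i=a_{i\pi(i)}x_{\pi(i)}$, chain them around each cycle to obtain $(1-P_j)x_{i_{j1}}=0$, and split on whether the cycle product equals $1$. Your write-up is in fact more complete than the paper's, which stops at the scalar equation and never explicitly carries out the free-coordinate count --- in particular the normalization $\max_i x_i=0$, which is automatic when $p<q$ but forces one free parameter to vanish (hence the drop to $q-1$) when $p=q$.
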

\proof The equation $Ax=x$ has the following form
\begin{equation}\label{e2}
x_{i_{js}}=a_{i_{js}i_{js+1}}x_{i_{js+1}}\ \ s=1,\dots,k_j, \ \ j=1,\dots,q.
\end{equation}
From (\ref{e2}) we get
\begin{equation}\label{e3}
a_{i_{j1}i_{j2}}a_{i_{j2}i_{j3}}...a_{i_{jk_j}i_{j1}}x_{i_{j1}}=x_{i_{j1}}.
\end{equation}
If $a_{i_{j1}i_{j2}}a_{i_{j2}i_{j3}}...a_{i_{jk_j}i_{j1}}\neq 1$ then equation (\ref{e3}) has unique solution
$x_{i_{j1}}=0$ and if $a_{i_{j1}i_{j2}}a_{i_{j2}i_{j3}}...a_{i_{jk_j}i_{j1}}=1$ then it has infinitely many solutions $x_{i_{j1}}\leq 0$. This completes the proof.
\endproof

\section{Behavior of trajectories}

Let $x^{(0)}\in \mathcal I_n$ be an initial point, and let $\{x^{(0)}, x^{(1)}, x^{(2)},\dots\}$ be the trajectory (dynamical system) of the point $x^{(0)}$ with respect to operator $A$, i.e. $x^{(m+1)}=A(x^{(m)}), m=0,1,\dots$.
Denote by $\omega(x^{(0)})$ the set of limit points of the trajectory $x^{(m)}, m\geq 0$. Since $\{x^{(m)}\}_{m=0}^\infty\subset\mathcal I_n$ and $\mathcal I_n$
is a compact set \cite{z}, it follows that $\omega(x^{(0)})\ne \emptyset$. If $\omega(x^{(0)})$ consists of a single point, then the trajectory converges, and the point is a fixed point of the operator $A$.

Let us first investigate the trajectory $x^{(m)}, m\geq 0$ when the initial point $x^{(0)}$ contains a coordinate equal to $-\infty$. For given operator $A$ satisfying (i) or (ii) we define a directed pseudograph (or multigraph i.e. a "graph" which can have both multiple edges and loops), $G_A=(V,L)$ with vertices $V=\{1,2,\dots,n\}$ and edges $L$ as
$$L=\{\langle i,j\rangle: \ \ {\rm with\, direction \, from}\ \ i \ \ {\rm to} \ \ j \ \ {\rm if} \ \  a_{ji}>0\}.$$ Note that this graph contains a loop $\langle i,i\rangle$ if $a_{ii}>0$, the graph contains two edges connecting $i$ and $j$  if $a_{ij}a_{ji}>0$. In the last case one edge directed from $i$ to $j$, another one is directed from $j$ to $i$.

\begin{pro}\label{pg} \begin{itemize}

\item[a)] If $x^{(0)}$ has not any coordinate equal to $-\infty$ then $x^{(m)}$ does not contain any coordinate equal to $-\infty$ for any finite $m\geq 1$.

\item[b)] If graph $G_A$ does not contain any cycle (taking directions into account) and $x^{(0)}$ has some coordinates equal to $-\infty$ then the vectors $x^{(m)}$, $m\geq n$ do not contain any coordinate equal to $-\infty$.

\item[c)] If the collection of vertices $\{i_1,\dots,i_q\}\subset V$, $q\geq 1$ gives a cycle on the graph $G_A$ and if $x_{i_k}^{(0)}=-\infty$ for some $k=1,\dots,q$ then the vectors $x^{(m)}$ have at least one coordinate equal to $-\infty$  for any $m\geq 1$. Moreover, the vectors $u_j=(u_{j,1},\dots, u_{j,n})\in \mathcal I_n$, with $u_{j,i}=0$ if $i\ne i_j$ and $u_{j,i}=-\infty$ if $i=i_j$, $j=1,\dots,q$ form a $q$-cycle with respect to the operator $A$.
     \end{itemize}
\end{pro}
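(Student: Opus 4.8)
The plan is to monitor, at every time $m$, precisely which coordinates of $x^{(m)}$ have degenerated to $-\infty$. Recall that $A$ acts by the usual operations, $y_j=\sum_{l=1}^n a_{jl}x_l$, and that on $\mathbf{R}_{\max}$ one uses $0\cdot(-\infty)=0$, $c\cdot(-\infty)=-\infty$ for $c>0$, and $c+(-\infty)=-\infty$ for every $c$. Since each point of $\mathcal{I}_n$ has all coordinates $\le 0$ and all $a_{jl}\ge 0$, every summand $a_{jl}x_l$ is either a finite non-positive number or $-\infty$, so no cancellation is possible and a sum is $-\infty$ exactly when one of its terms is. Writing $S_m=\{i:\ x^{(m)}_i=-\infty\}$, I would first establish the propagation rule
$$S_{m+1}=N^+(S_m):=\{\,j:\ a_{jl}>0\ \text{for some}\ l\in S_m\,\},$$
that is, $x^{(m+1)}_j=-\infty$ iff $x^{(m)}_l=-\infty$ for some $l$ with $a_{jl}>0$. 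By the definition of $G_A$ (edge $\langle l,j\rangle$ iff $a_{jl}>0$), $N^+(S_m)$ is exactly the out-neighbourhood in $G_A$ of the vertex set $S_m$. This single fact drives all three parts, and $x^{(m)}\in\mathcal{I}_n$ for every $m$ by Theorem \ref{t1}.

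For part a), the hypothesis is $S_0=\emptyset$, and $N^+(\emptyset)=\emptyset$, so a one-line induction gives $S_m=\emptyset$ for every finite $m$, which is the assertion. For part b), iterating the propagation rule yields
$$S_m=\{\,j:\ \text{there is a directed walk of length }m\text{ in }G_A\text{ from some vertex of }S_0\text{ to }j\,\}.$$
If $G_A$ has no directed cycle (loops included), then it carries no directed walk of length $\ge n$: such a walk would list $n+1$ vertices and, by pigeonhole, repeat one, producing a directed closed walk and hence a cycle. Therefore $S_m=\emptyset$ for all $m\ge n$, i.e. $x^{(m)}$ has no $-\infty$ coordinate once $m\ge n$.

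For part c), relabel the cycle as $i_1\to i_2\to\cdots\to i_q\to i_1$ in $G_A$, so that $a_{i_{s+1}\,i_s}>0$ with indices read mod $q$. The propagation rule then shows that $x^{(0)}_{i_k}=-\infty$ forces $x^{(1)}_{i_{k+1}}=-\infty$, and inductively $x^{(m)}_{i_{k+m}}=-\infty$; since $k+m$ sweeps cyclically through $\{1,\dots,q\}$, at least one coordinate of $x^{(m)}$ is $-\infty$ for every $m\ge 1$, which is the first claim. For the "moreover" part I would compute $A(u_j)$ directly. Each $u_j$ lies in $\mathcal{I}_n$ (it has zero coordinates, so its maximum is $0$), and its only $-\infty$ sits at $i_j$; hence $(Au_j)_k=-\infty$ precisely when $a_{k\,i_j}>0$, i.e. when $\langle i_j,k\rangle$ is an edge, and $(Au_j)_k=0$ otherwise. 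Here condition (ii) is essential: column $i_j$ has a unique positive entry, so $i_j$ has a single out-neighbour, namely $i_{j+1}$; thus $A(u_j)=u_{j+1}$ (indices mod $q$), and $u_1,\dots,u_q$ are cyclically permuted by $A$, forming a $q$-cycle.

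The main obstacle is not a single computation but pinning down the foundations: one must justify the propagation rule, in particular that no finite contribution ever revives a $-\infty$ coordinate and no $-\infty$ ever cancels, which rests on the sign restrictions $x_l\le 0$ and $a_{jl}\ge 0$ that hold along the whole trajectory by Theorem \ref{t1}. The second delicate point is the "moreover" statement of c): the clean identity $A(u_j)=u_{j+1}$ relies on condition (ii), under which each column carries exactly one positive entry and so the out-degree of $i_j$ in $G_A$ equals one; without this restriction (for instance under (i), where a loop $a_{i_j i_j}>0$ may coexist with the cycle edge) $A(u_j)$ could acquire several $-\infty$ coordinates and the $u_j$ would no longer close into a cycle.
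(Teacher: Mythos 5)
Your proof is correct, and its engine is the same as the paper's: a $-\infty$ coordinate at position $l$ at time $m$ produces a $-\infty$ at position $j$ at time $m+1$ exactly when $a_{jl}>0$, i.e.\ the $-\infty$'s travel along directed edges of $G_A$. Within that shared approach your write-up makes three genuine improvements. First, you state the propagation rule $S_{m+1}=N^+(S_m)$ explicitly and justify it (non-negativity of the $a_{jl}$ and non-positivity of the $x_l$ preclude cancellation, so a sum is $-\infty$ iff some term is), whereas the paper uses this rule tacitly. Second, in b) the paper follows a single $-\infty$ down a longest directed path and asserts that its terminal vertex has no outgoing edge ``otherwise the path is not the longest''; that parenthesis is incomplete (an outgoing edge could land inside the path, and it is acyclicity, not maximality alone, that forbids this), and the paper also glosses over the fact that under condition (i) a $-\infty$ can branch to several coordinates at once. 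Your formulation --- $S_m$ is the set of endpoints of length-$m$ walks issuing from $S_0$, and an acyclic digraph on $n$ vertices carries no directed walk of length $\geq n$ by pigeonhole --- handles both issues cleanly. Third, and most substantively, you are right that the ``moreover'' claim in c) needs condition (ii): the proposition as stated covers operators satisfying (i) or (ii), but under (i) the column $i_j$ may contain several positive entries (e.g.\ a loop $a_{i_ji_j}>0$ alongside the cycle edge $a_{i_{j+1}i_j}>0$), in which case $A(u_j)$ acquires two $-\infty$ coordinates and $A(u_j)\neq u_{j+1}$, so the $u_j$ do not close into a $q$-cycle. The paper's proof of c) silently assumes that exactly one $-\infty$ travels around the cycle, which is guaranteed only by (ii); your explicit computation of $A(u_j)=u_{j+1}$ from the uniqueness of the positive entry in column $i_j$ supplies precisely the argument the paper omits.
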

\proof a) Straightforward.

b)  Since the graph does not contain any cycle, it only contains (directed) paths. Assume the longest directed path of the graph $G_A$ is $p=p_1,p_2,\dots,p_k=q$, $k\leq n$, from $p\in V$ to $q\in V$, $p\ne q$ and suppose $x_{p_1}^{(0)}=-\infty$ then since $a_{p_2p_1}>0$ we have $x_{p_2}^{(1)}=-\infty$. Similarly we get $x_{p_i}^{(i-1)}=-\infty$ for $i=3,\dots,k$. Since $a_{jp_k}=0$ for any $j=1,\dots,n$ (otherwise the path is not the longest) the last $-\infty$ does not give any contribution to $x^{(k)}$.
This argument also works for any shorter path, i.e. after finite steps (the length of the path) a $"-\infty"$ coordinate
does not give any contribution to the next terms of the trajectory. Since the length of the longest path can not be larger than $n$, starting from step $n$ all $"-\infty"$ coordinates disappear from $x^{(m)}$, $m\geq n$.

c) Without lost of generality we assume that $i_j=j$ for all $j=1,\dots,q$ and assume that $x_1^{(0)}=-\infty$. Then it is easy to see that $x_2^{(1)}=-\infty$, $x_3^{(2)}=-\infty$,$\dots$. Since the collection of vertices $\{1,\dots,q\}$ is a cycle, the $-\infty$ coordinate "travels" cyclicly as follows:
 \begin{equation}\label{cp}
 x_1^{(qs)}\to x_2^{(qs+1)}\to\dots\to x_{q}^{(qs+q-1)}\to x_1^{(q(s+1))}, s\geq 0.
  \end{equation}
  Hence each vector $x^{(m)}$ has at least one coordinate equal to $-\infty$ for any $m\geq 1$.
  If $x^{(0)}=u_j$ for some $j$ then the cycle (\ref{cp}) gives the $q$-cycle $u_1\to u_2\to\dots\to u_q\to u_1$.
\endproof

{\bf Case} (i): Using notations of the previous section we get that $x^{(m)}_j=0$ for any $j=n_0+1,\dots,n$ and $m\geq 1$. Moreover
$x^{(m)}\in \mathcal I_{n,n_0}$, $m\geq 1$. Thus the investigation of trajectory $x^{(m)}, m\geq 0$ can be reduced to investigation of trajectory $u^{(m+1)}=B(u^{(m)}), \, m\geq 0$ with operator $B: \mathcal J_{n_0}\to\mathcal J_{n_0}$ where
   $$\mathcal J_n=\{x\in \mathbf R_{\rm max}^n: x_i\leq 0, \forall i=1,\dots,n\},$$
and $B=(a_{ij})_{i,j=1,\dots,n_0}$ is the $n_0\times n_0$-minor of matrix $A$.

   It is known (see, for example, \cite{d}) that the eigenvalues and eigenvectors of $B$ determine the behavior of trajectories. For example, if $u$ is an eigenvector of $B$, with a real eigenvalue smaller than one, then the straight lines given by the points along $\alpha u$, with $\alpha\in \mathbf R$, is an invariant curve of the map $B$. Points in this straight line run into the fixed point.

   The following theorem is a corollary of a known Theorem of the theory of linear dynamical systems \cite{d}.

   \begin{thm} Suppose $B:\mathcal J_{n_0}\to \mathcal J_{n_0}$ has eigenvalues $\lambda_1,\dots,\lambda_{n_0}$
   \begin{itemize}
   \item[1.] If $|\lambda_i|<1$, $i=1,\dots,n_0$ then $\omega(x^{(0)})=\{(0,\dots,0)\}$ for any $x^{(0)}\in \mathcal I_n$.
   \item[2.] If $|\lambda_i|>1$, $i=1,\dots,n_0$ then $\lim_{m\to +\infty}x^{(-m)}=(0,\dots,0)$ for any $x^{(0)}\in \mathcal I_n$.
   \item[3.] If $|\lambda_i|<1$, $i=1,\dots,p$ and $|\lambda_i|>1$, $i=p+1,\dots,n_0$ then
   there is a $p$ dimensional space $W^s\subset \mathbf R^n_{max}$ and a $n_0-p$ dimensional space $W^u\subset \mathbf R^n_{max}$ on which
   \begin{itemize}
   \item[3.a)]
   If $x^{(0)}\in W^s\cap \mathcal I_n$, then $\omega(x^{(0)})=\{(0,\dots,0)\}$.
    \item[3.b)]
   If $x^{(0)}\in W^u\cap \mathcal I_n$, then $\lim_{m\to +\infty}x^{(-m)}=(0,\dots,0)$.
   \item[3.c)] If $x\in \mathcal I_n\setminus (W^s\cup W^u)$ then $\lim_{m\to +\infty}|x^{(m)}|=+\infty$.
   \end{itemize}
   \end{itemize}
     \end{thm}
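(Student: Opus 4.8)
The plan is to reduce the statement to the classical asymptotic theory of a single linear iteration on $\R^{n_0}$ and then read off each case from the (generalized, real) eigenspace decomposition of $B$. First I would record the reduction already prepared in Case (i): for $m\geq 1$ the last $n-n_0$ coordinates of $x^{(m)}$ vanish, and the first $n_0$ coordinates evolve as $u^{(m)}=B^m u^{(0)}$, where $u^{(0)}$ is the truncation of $x^{(0)}$. Since $B$ has non-negative entries, it maps the cone $\mathcal J_{n_0}=\{u:u_i\leq 0\}$ into itself, so the forward orbit stays in $\mathcal J_{n_0}$ and every $\omega$-limit computed in $\R^{n_0}$ automatically lies in $\mathcal I_n$. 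The only inputs I need from linear dynamics are the spectral estimates: writing $\rho(\cdot)$ for the spectral radius, one has $\|B^m\|\to 0$ whenever $\rho(B)<1$, and, when $B$ is invertible, $\|B^{-m}\|\to 0$ whenever $\rho(B^{-1})<1$.

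For part 1 all eigenvalues satisfy $|\lambda_i|<1$, so $\rho(B)<1$ and $B^m u^{(0)}\to 0$ for every $u^{(0)}$, in particular for the truncation of any $x^{(0)}\in\mathcal I_n$. Hence $x^{(m)}\to(0,\dots,0)$ and $\omega(x^{(0)})=\{(0,\dots,0)\}$. For part 2 all $|\lambda_i|>1$, so $0$ is not an eigenvalue, $B$ is invertible, and $\rho(B^{-1})=(\min_i|\lambda_i|)^{-1}<1$; thus $x^{(-m)}=B^{-m}u^{(0)}\to 0$, which is exactly the claimed backward convergence.

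For part 3 I would split $\R^{n_0}=E^s\oplus E^u$ into the $B$-invariant stable and unstable generalized eigenspaces, $E^s$ spanned by the generalized eigenvectors with $|\lambda|<1$ (dimension $p$) and $E^u$ by those with $|\lambda|>1$ (dimension $n_0-p$); I then set $W^s,W^u$ to be these subspaces, embedded in $\mathbf R^n_{max}$ by appending zero coordinates. Since $\rho(B|_{E^s})<1$, the forward orbit of any $x^{(0)}\in W^s\cap\mathcal I_n$ tends to the origin, giving 3.a; since $B|_{E^u}$ is invertible with $\rho((B|_{E^u})^{-1})<1$, the backward orbit of any $x^{(0)}\in W^u\cap\mathcal I_n$ tends to the origin, giving 3.b. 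For 3.c I write $u^{(0)}=u^s+u^u$; the hypothesis $x^{(0)}\notin W^s\cup W^u$ forces $u^u\neq 0$, whence $\|B^m u^u\|\to\infty$ while $\|B^m u^s\|$ stays bounded, so $|x^{(m)}|\to+\infty$.

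The genuine care in the argument is hidden in the spectral estimates when $B$ is not diagonalizable or has complex spectrum: on a Jordan block the iterates grow like $m^{k}|\lambda|^m$, so the limits $\|B^m v\|\to 0$ (for $|\lambda|<1$) and $\|B^m v\|\to\infty$ (for $|\lambda|>1$) still hold because the exponential dominates the polynomial factor, and for complex $\lambda$ I would pass to the two-dimensional real generalized eigenspaces to keep everything over $\R$. This is precisely the point where the cited linear dynamical systems theorem \cite{d} is invoked; once those estimates are in hand the three cases are immediate, and the remark that the forward-invariant cone keeps every $\omega$-limit inside $\mathcal I_n$ closes the proof.
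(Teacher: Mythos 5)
Your proof is correct and takes essentially the same route as the paper, which offers no argument of its own but simply declares the theorem a corollary of the standard theory of linear dynamical systems in \cite{d}; your reduction to $B$ acting on $\mathbf R^{n_0}$, the spectral-radius estimates, and the stable/unstable generalized-eigenspace splitting are exactly the content of that cited theorem, unpacked. The one caveat --- shared equally by the paper's one-line proof --- is that the classical theory applies only when the truncated initial vector has finite coordinates: an $x^{(0)}\in\mathcal I_n$ with a $-\infty$ entry lying on a cycle of $G_A$ (which can occur even when all $|\lambda_i|<1$, e.g.\ $a_{12}=a_{21}=1/2$) has $\omega$-limit points with $-\infty$ coordinates by Proposition \ref{pg}(c), so part 1 as literally stated tacitly excludes such initial points in both your argument and the paper's.
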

Let us now consider an example:

{\bf Example 2.} Consider
$A=\left(\begin{array}{cc}
a_{11}& a_{12} \\
0& 0
\end{array}\right).$
In this case we have $A^n=\left(\begin{array}{cc}
a^n_{11}& a^{n-1}_{11}a_{12} \\
0& 0
\end{array}\right)$. Consequently, for any $x^{(0)}=(x^{(0)}_1, x^{(0)}_2)\in \mathcal I_2$ we obtain
$$x^{(n)}=A^n(x^{(0)})=(a^n_{11}x^{(0)}_1+a^{n-1}_{11}a_{12}x^{(0)}_2,0)$$ and
$$\lim_{n\rightarrow\infty}x^{(n)}=\lim_{n\rightarrow\infty}(a^n_{11}x^{(0)}_1+a^{n-1}_{11}a_{12}x^{(0)}_2,0)=
\left\{\begin{array}{lll}(0,0),\ \ \ \ \ \ \ \ \ \ \ \ \ \ \ \mbox{if}\ a_{11}<1,\\
(x^{(0)}_1+a_{12}x^{(0)}_2,0),\ \mbox{if}\ a_{11}=1,\\
(-\infty,0),\ \ \ \ \ \ \ \ \ \ \ \mbox{if}\
a_{11}>1.
\end{array}\right.$$

{\bf Case} (ii): In this subsection we shall study trajectory $x^{(m)}, m\geq 0$ for a
linear operator $A$ which satisfies condition (ii) of Theorem \ref{t1}.

 Suppose the operator $A=A_\pi$ corresponds to the permutation $\pi$  which has the following decomposition into disjoint cycles
$$\pi=(i_{11},i_{12},...,i_{1k_1})(i_{21},i_{22},...,i_{2k_2})...(i_{q_1},i_{q_2},...,i_{qk_q}),\, 1\leq q\leq n, \, 1\leq k_j\leq n.$$
By Proposition \ref{p1} we have $A^m_\pi=A_{\pi^m}$, where $\pi^m=\pi^{m-1}\pi$ is $m$-th iteration of $\pi$.

Denote $Q_p=a_{i_{p1}i_{p2}}a_{i_{p2}i_{p3}}\dots a_{i_{p(k_p-1)}i_{pk_p}}a_{i_{pk_p}i_{p1}}$,\ \ $p=1,\dots,q$.
The following theorem gives complete description of the trajectory:

\begin{thm}\label{t5} If $i\in \{i_{p1},i_{p2},...,i_{pk_p}\}$ for some $p=1,\dots,q$ then

\begin{equation}\label{u}\lim_{s\to\infty}x_i^{(k_ps+r)}=\left\{\begin{array}{llll}
0, \ \ \mbox{if} \ \ Q_p<1;\\[2mm]
\left(\prod_{j=0}^{r-1} a_{\pi^j(i)\pi^{j+1}(i)}\right)x^{(0)}_{\pi^{r}(i)},\ \ \mbox{if} \ \ Q_p=1, \, 0\leq r<k_p;\\[2mm]
-\infty,\ \ \mbox{if} \ \ Q_p>1,
\end{array}\right.
\end{equation}
\end{thm}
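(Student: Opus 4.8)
The plan is to obtain a closed form for the $i$-th coordinate of the $m$-th iterate $x^{(m)}=A^m(x^{(0)})$ and then read off the three regimes directly. Recall from the proof of Theorem~\ref{t1}(ii) and Proposition~\ref{p1}(a) that the action of $A=A_\pi$ on a vector is simply $(A_\pi x)_i=a_{i\pi(i)}x_{\pi(i)}$. First I would prove by induction on $m$ that
\[
x_i^{(m)}=\left(\prod_{j=0}^{m-1}a_{\pi^j(i)\pi^{j+1}(i)}\right)x^{(0)}_{\pi^m(i)},
\]
the base case $m=1$ being the displayed action and the inductive step following from $(A^{m+1}x)_i=a_{i\pi(i)}(A^m x)_{\pi(i)}$ after reindexing the product. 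Equivalently, this quantity is exactly the single nonzero entry in row $i$ of $A^m=A_{\pi^m}$, which is legitimate by Proposition~\ref{p1}(b).

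Next I would specialize to $i\in\{i_{p1},\dots,i_{pk_p}\}$, so that $\pi^{k_p}(i)=i$ and the map $j\mapsto\pi^j(i)$ is periodic of period $k_p$. Writing $m=k_ps+r$ with $0\le r<k_p$ gives $\pi^m(i)=\pi^r(i)$, and the product over $j=0,\dots,m-1$ splits into $s$ consecutive full turns around the $p$-th cycle followed by a partial product of length $r$. The key point is that each full turn contributes
\[
\prod_{j=0}^{k_p-1}a_{\pi^j(i)\pi^{j+1}(i)}=Q_p,
\]
since the factors are just a cyclic rearrangement of those defining $Q_p$ and hence the product is independent of the starting vertex $i$ within the cycle. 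This yields the decomposition
\[
x_i^{(k_ps+r)}=Q_p^{\,s}\left(\prod_{j=0}^{r-1}a_{\pi^j(i)\pi^{j+1}(i)}\right)x^{(0)}_{\pi^r(i)}.
\]

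Finally I would let $s\to\infty$. The bracketed partial product is a fixed positive constant and $x^{(0)}_{\pi^r(i)}\le 0$ is fixed, so the behaviour is governed entirely by $Q_p^{\,s}$: it tends to $0$ when $Q_p<1$, stays equal to $1$ when $Q_p=1$ (giving precisely the middle line of \eqref{u}), and tends to $+\infty$ when $Q_p>1$. Multiplying by the non-positive constant factor produces $0$, the constant itself, and $-\infty$ respectively, which is the assertion. I expect the only genuinely delicate point to be the boundary behaviour of the first and last regimes: when $Q_p>1$ the limit is $-\infty$ only if $x^{(0)}_{\pi^r(i)}<0$ strictly (if it equals $0$ the orbit coordinate stays $0$), and for the $Q_p<1$ case to give $0$ one must assume $x^{(0)}_{\pi^r(i)}\ne -\infty$. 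These finiteness and strictness conventions are exactly what Proposition~\ref{pg} isolates, so they should be invoked there rather than re-examined here.
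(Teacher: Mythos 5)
Your proposal is correct and follows essentially the same route as the paper: the closed form $x_i^{(m)}=\prod_{j=0}^{m-1}a_{\pi^j(i)\pi^{j+1}(i)}\,x^{(0)}_{\pi^m(i)}$ obtained by iterating $(A_\pi x)_i=a_{i\pi(i)}x_{\pi(i)}$, followed by the decomposition $m=k_ps+r$ giving $x_i^{(k_ps+r)}=Q_p^s\bigl(\prod_{j=0}^{r-1}a_{\pi^j(i)\pi^{j+1}(i)}\bigr)x^{(0)}_{\pi^r(i)}$ and the limit analysis in $Q_p^s$, is exactly the paper's argument (its equations (\ref{i}) and (\ref{ii})). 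You merely make explicit two points the paper leaves implicit --- the induction and the cyclic-shift invariance behind each full turn contributing $Q_p$ --- and your closing caveats about $x^{(0)}_{\pi^r(i)}=0$ or $=-\infty$ are legitimate boundary conditions that the paper's statement also tacitly assumes away via Proposition \ref{pg}.
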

\proof From $x^{(m)}=A_\pi(x^{(m-1)})$ we get
\begin{equation}\label{i}
x_i^{(m)}=a_{i\pi(i)}x^{(m-1)}_{\pi(i)}=
a_{i\pi(i)}a_{\pi(i)\pi^2(i)}x^{(m-2)}_{\pi^2(i)}=\dots
=\prod_{j=0}^{m-1}a_{\pi^j(i)\pi^{j+1}(i)}x^{(0)}_{\pi^m(i)}.
\end{equation}
For $i\in \{i_{p1},i_{p2},...,i_{pk_p}\}$ and $m=k_ps+r$ from  (\ref{i}) we get
\begin{equation}\label{ii}
x_i^{(k_ps+r)}=Q_p^s\prod_{j=0}^{r-1}a_{\pi^j(i)\pi^{j+1}(i)}x^{(0)}_{\pi^r(i)}.
\end{equation}
This equality completes the proof.
\endproof
\begin{rk} By Theorem \ref{t2} we know that in case $Q_p=1$ there are infinitely many fixed points.
The equality (\ref{u}) shows that in this case the limit of the $i$th coordinate does not exist,
it depends on the remainder $r$.
\end{rk}

{\bf Example 3.} Consider $A=\left(\begin{array}{cc}
 0& a_{12} \\
a_{21}& 0
\end{array}\right)$. It is easy to get
$$A^{2s}=\left(\begin{array}{cc}
a^s_{12}a^s_{21}& 0 \\
0& a^s_{21}a^s_{12}
\end{array}\right) \ \ {\rm and}\ \
A^{2s+1}=\left(\begin{array}{cc}
0& a^{s+1}_{12}a^{s}_{21} \\[2mm]
a^{s}_{12}a^{s+1}_{21}& 0
\end{array}\right),\, s\geq 0.$$

Consequently, $$x^{(m)}=\left\{\begin{array}{ll}
((a_{12}a_{21})^sx^{(0)}_1,\, (a_{12}a_{21})^sx^{(0)}_2),  \ \ \mbox{if} \ \
m=2s,\\[2mm]
((a_{12}a_{21})^sa_{12}x^{(0)}_2,\, (a_{12}a_{21})^sa_{21}x^{(0)}_1)\ \ \mbox{if} \ \ m=2s+1.
\end{array}\right.$$
Hence

$$\lim_{s\rightarrow\infty}x^{(2s+r)}=\left\{\begin{array}{llll}
(0,0),\ \ \ \ \ \mbox{if}\ a_{12}a_{21}<1,\\[2mm]
(x_1^{(0)},x_2^{(0)}), \ \ \mbox{if}\ a_{12}a_{22}=1, r=0,\\[2mm]
(a_{12}x^{(0)}_2,a_{21}x^{(0)}_1),\ \ \mbox{if}\ a_{12}a_{22}=1, r=1,\\[2mm]
(-\infty,0)\, {\rm or} \,(0,-\infty),\ \ \mbox{if}\ a_{12}a_{21}>1.\\
\end{array}\right.$$

\section*{ Acknowledgements}

 U. Rozikov thanks Institut des Hautes \'Etudes Scientifiques (IHES), Bures-sur-Yvette, France for support of his visit to IHES and IMU/CDC-program for a (travel) support.

{}

\begin{thebibliography}{99}

\bibitem{a} M. Akian, \emph{Densities of idempotent measures and large deviations}. Trans. Amer. Math. Soc. 351, 4515-4543 (1999).

\bibitem{clr} J.M. Casas, M. Ladra, U.A. Rozikov, \emph{A chain of evolution algebras}. Linear Algebra Appl.  435(4), 852--870  (2011).

\bibitem{dd} P. Del Moral, M. Doisy, \emph{On applications of Maslov optimization theory}. Math. Notes. 69(2), 232-244 (2001).
\bibitem{dd1} P. Del Moral, M. Doisy, \emph{Maslov idempotent probability calculus}, I, II. Theory Probab. Appl. 43(4), 562-576 (1998); 44(2), 319-332 (1999).

\bibitem{d} R. L. Devaney, \emph{An introduction to chaotic dynamical system} (Westview Press, 2003).

\bibitem{gmr} R.N. Ganikhodzhaev, F.M. Mukhamedov, U.A. Rozikov, \emph{Quadratic
stochastic operators: Results and open problems}.  Infin. Dim. Anal., Quantum Probab.
Related Topics. 14(2), 279-335 (2011).

\bibitem{l1} G. L. Litvinov, V.P. Maslov (eds.), \emph{Idempotent mathematics and
mathematical physics} (Vienna, 2003), Contemp. Math., 377, Amer.
Math. Soc., Providence, RI, 2005.

\bibitem{l2} G. L. Litvinov, \emph{Maslov dequantization, idempotent and
tropical mathematics: a brief introduction}. J. Math. Sciences. 140(3), 426-444 (2007).

\bibitem{m} V. P. Maslov, S. N. Samborskii (eds.), \emph{Idempotent analysis}, Adv. Soviet Math., vol. 13,
Amer. Math. Soc., Providence, RI 1992.

\bibitem{r} W. Rudin, \emph{Idempotent measures on Abelian groups}, Pacific J. Math. 9 , 195-209 (1959).

\bibitem{sh} A. N. Shiryaev, \emph{Probability}, \, 2nd Ed. (Springer, 1996).

\bibitem{z} M.M. Zarichnyi, \emph{Spaces and maps of idempotent measures}. Izvestiya: Mathematics. 74(3), 481-499 (2010).
\end{thebibliography}
\end{document}